\def\natu           {\mathbb N}
\def\inte 		{\mathbb Z}
\def\real		{\mathbb R}
\def\comp		{\mathbb C}
\def\R		{\cal R}
\def\L		{\cal L}
\def\lra		{\longrightarrow}
\title{On embeddability of Coxeter groups\\ into the Riordan group}
\author[1]{Tian-Xiao He}
\author[2]{Nikolai A. Krylov}
\affil[1]{Department of Mathematics, Illinois Wesleyan University, 
1312 Park Street, Bloomington IL 61702, USA 

email: the@iwu.edu}
\affil[2]{Department of Mathematics, Siena College, 

515 Loudon Road, Loudonville NY 12211, USA 

email: nkrylov@siena.edu}
\date{}
\begin{document}

\newtheorem{theorem}{Theorem}
\newtheorem{lemma}[theorem]{Lemma}
\newtheorem{claim}[theorem]{Claim}
\newtheorem{Corollary}[theorem]{Corollary}
\newtheorem{conj}[theorem]{Conjecture}
\newtheorem{prop}[theorem]{Proposition}
\newtheorem{question}{Problem}
\theoremstyle{definition}
\newtheorem{definition}[theorem]{Definition}
\newtheorem{example}[theorem]{Example}
\numberwithin{equation}{section}

\newcommand{\F}{\mbox{$\mathcal{F}$}}

\maketitle
\begin{abstract}
We discuss examples of linear representations of finite groups as subgroups of the 
Riordan group. In particular, we show that the symmetric group of degree three   
has no faithful representation as a subgroup of the Riordan group over the complex 
numbers, but can be embedded as a subgroup of the Riordan group over a field 
of characteristic three.
\end{abstract}

\noindent {\it 2020 Mathematics Subject Classification}: 05A05, 20C30,05E16, 20F05, 20F55, 20H20

\noindent {\it Keywords}: Coxeter group, Riordan group, Riordan array, Involution, 
Representations of a finite symmetric group.


\section{Introduction}

The Riordan group was introduced by Shapiro et al. \cite{SGWW} in 1991. The elements 
of this group, called Riordan arrays, are proved to be useful in evaluating combinatorial 
sums in a uniform and constructive way. A Riordan array is defined as a pair of formal 
power series with coefficients in an arbitrary field $\mathbb K$ (e.g. $\comp$). When 
there are no zeros on the main diagonal, we say that the Riordan array is {\sl proper}.

For example, the alternating Pascal's matrix, where the $n$-th row consists of the 
signed binomial coefficients (assuming that ${n\choose k} = 0$, if $k>n$) 
\begin{equation}
\label{Pascal1}
P_1:=
\left((-1)^k {n\choose k}\right)_{n,k\geq 0}
= \begin{pmatrix}
1 & \phantom{-}0 & 0 & \phantom{-}0 & \cdots\\
1 & -1 & 0 & \phantom{-}0 & \cdots\\
1 & -2 & 1 & \phantom{-}0 & \cdots\\
1 & -3 & 3 & -1 & \cdots\\
\vdots & \vdots & \vdots & \vdots & \ddots
\end{pmatrix},
\end{equation}
is a famous element of this group. Each column of $P_1$ consists of the coefficients 
of a formal power series (f.p.s.): the zeroth column is given by the geometric series $1/(1-t)$, 
and the $k$-th column, for an arbitrary $k\geq 0$, is given by the coefficients of 
$(-t)^k/(1-t)^{k+1}$. The group operation is the usual row-by-column matrix multiplication, 
and one quickly notices that the square of $P_1$ will give us the infinite identity 
matrix with ones on the main diagonal and zeros everywhere else, i.e., $I$, the identity 
of the Riordan group. Thus, $P_1$ is an element of order 2, which is called an involution. 

More formally, let us consider the set of all formal power 
series (f.p.s.) $\F = {\mathbb K}[\![$$t$$]\!]$ in indeterminate $t$ with coefficients in $\mathbb K$; 
the \emph{order} of $f(t)  \in \F$, $f(t) =\sum_{k=0}^\infty f_kt^k$ ($f_k\in {\mathbb K}$), 
is the minimal number $r\in{\mathbb N_0=\{0\}\cup\natu}$ such that $f_r \neq 0$. 
Denote by $\F_r$ the set of formal power series of order $r$. Let $g(t) \in \F_0$ and 
$f(t) \in \F_1$; the pair $\bigl(g(t) ,\,f(t)\bigr)$ defines the {\em (proper) Riordan array} 
$$
A =(a_{n,k})_{n,k\geq 0}=\bigl(g(t) ,\,f(t)\bigr)
$$ 
having
\begin{equation}\label{1}
a_{n,k} = [t^n]g(t) f(t)^k,
\end{equation} 
where $[t^n]h(t)$ denotes the coefficient of $t^n$ in the expansion of a f.p.s. $h(t)$. 
The reader will find all the main properties of the functionals $[t^n]$ in the {\sl Monthly's} 
paper by Merlini et al. \cite{Merlini} from 2007. It is a classical fact now that the set of all 
such Riordan arrays forms a multiplicative group, called the Riordan group, and 
denoted by ${\cal R}({\mathbb K})$ 
(see, for example, \cite{Barry1}, \cite{Cameron2}, 
\cite{Davenport1}, \cite{Shapiro2}, \cite{SGWW}). 
The group operation $*$ in ${\cal R}({\mathbb K})$ (will be omitted below for brevity) 
is written in terms of the pairs of the f.p.s. as 
\begin{equation}
\label{operation1}
\bigl(g_1(t) ,\,f_1(t)\bigr) * \bigl(g_2(t) ,\,f_2(t)\bigr) = \bigl(g_1(t) g_2(f_1(t)),\,f_2(f_1(t))\bigr),
\end{equation}
with the Riordan array $I = (1,\,t)$ acting as the group identity. The inverse of the 
Riordan array $\bigl(g(t) ,\,f(t)\bigr)$ is the pair
$$
\bigl(g(t) ,\,f(t)\bigr)^{-1} = \left(\frac{1}{g(\bar{f}(t))},\,\bar{f}(t)\right),
$$
where we used the standard notation $\bar{f}(t)$ for the compositional inverse of $f(t)$. 
Thus, $\bar{f}(f(t)) = t$ and $f(\bar{f}(t)) = t$. The Riordan group ${\cal R}({\mathbb K})$, is the 
semidirect product of two proper subgroups: the Appell subgroup and the associated 
(or Lagrange) subgroup (\cite{Barry1}, \cite{Shapiro2}). The Appell subgroup $\cal A$ is 
abelian, normal, and consists of the Riordan arrays $\bigl(g(t) ,\,t\bigr)$.
The Lagrange subgroup $\cal L$ consists of the Riordan arrays $\bigl(1 ,\,f(t)\bigr)$. 

If $G$ is a multiplicative group with the identity $e$, an element $g\in G$ 
is called an {\sl involution} if it is of order $2$, i.e., if $g^2=e$. Thus, 
$(g,\,f)\in {\cal R}({\mathbb K})$ is an (Riordan) involution if 
$(g,\,f)^2=\bigl(g\cdot g(f),\, f(f)\bigr) = (1,\,t)$. Namely, $g(f) = g\circ f=1/g$ and $f=\bar f$. 
The alternating Pascal's matrix $P_1$ is an example of such involution. An element $(g,\,f)$ 
in the Riordan group is said to be a pseudo-involution if and only if 
$$
(g,\,f)(1,\,-t) =(g,\,-f) ~~~ \mbox{or} ~~~ (1,\,-t) (g,\,f)=(g(-t), \,f(-t))
$$
is an involution. The classical Pascal's matrix, $\bigl(1/(1-t), \,t/(1-t)\bigr)=
\left({n\choose k}\right)_{n,k\geq 0}$, consisting of the non-negative binomial coefficients is 
an example of a pseudo-involution. Notice that $(1,\,-t)\in \cal L$ is 
an involution which is not in the center of ${\cal R}({\mathbb K})$, while $(-1,\,t)\in \cal A$ 
is an involution, which commutes with every element of ${\cal R}({\mathbb K})$.

Since its introduction, the group ${\cal R}({\mathbb K})$ has been of great interest in 
combinatorics and by now, there are numerous results clarifying the algebraic structure 
of this group. For instance, Luz\'on, Mor\'on, and Prieto-Martinez \cite{LMP} proved that 
any element in the group ${\mathcal R}({\mathbb K})$ generated by the involutions in 
the group is the product of at most four of them. The elements of finite order, and the 
involutions and pseudo-involutions in particular, have been studied by many 
mathematicians. Here is by no means exhaustive, a list of the corresponding 
publications: \cite{Burstein}, \cite{Cameron}, \cite{CK1}, \cite{CK2}, \cite{Coh}, 
\cite{Davenport1}, \cite{HS}, \cite{LMP}, \cite{Shapiro1}.

The alternating Pascal's matrix $P_1$ in (\ref{Pascal1}) is easily generalized to 
produce infinitely many similar involutions. Take any $b\in\mathbb K$, and 
consider the generalized Pascal's matrix (see \S 2 of \cite{Cameron}) of the form 
\begin{equation}
\label{GenPascal}
P_b = \left(\frac{1}{1- bt},\,\frac{-t}{1- bt}\right).
\end{equation}
We leave it to the reader to check that $P_b$ is indeed an involution, and that 
$P_{b_1}$ and $P_{b_2}$ do not commute, as long as $b_1\neq b_2$.

It is well known that the symmetric group ${\cal S}_n$, consisting of all bijections from the 
set $\{1, 2, \ldots, n\}$ to itself, is generated by {\sl transpositions} (permutations of order 2), 
i.e. involutions. It is also known (see \cite{Coxeter}, \S 6.2) that ${\cal S}_n$ has the 
following presentation in terms of generators and relations 
\begin{equation}
\label{Sn}
{\cal S}_n\cong \langle \sigma_1,\ldots, \sigma_{n-1} ~ | ~ \sigma_i^2 = 1, \sigma_i\sigma_j = 
\sigma_j\sigma_i  ~ \mbox{for} ~ |i-j|>1,~ (\sigma_i\sigma_{i+1} )^3 = 1 \rangle, 
\end{equation}
where $\sigma_i = (i ~ i+1),~\forall i\in\{1,\ldots, n-1\}$ are the generating involutions. 
In particular, for $n=3$ we have 
\begin{equation}
\label{S3}
{\cal S}_3\cong \langle \sigma_1, \sigma_2~ | ~ \sigma_1^2 = \sigma_2^2 = 
(\sigma_1 \sigma_2 )^3 = 1 \rangle.
\end{equation}
Moreover, the elements of ${\cal S}_3$ can be naturally embedded, preserving the group operation, 
into the general linear group of $3\times 3$ matrices ${\rm GL}(3,\mathbb K)$ as
\begin{equation}
\label{S3Present}
e\leftrightarrow \begin{pmatrix}
1 & 0 & 0\\
0 & 1 & 0\\
0 & 0 & 1\\
\end{pmatrix}, ~~ \sigma_1 \leftrightarrow \begin{pmatrix}
0 & 1 & 0\\
1 & 0 & 0\\
0 & 0 & 1\\
\end{pmatrix},  ~~ \sigma_2 \leftrightarrow \begin{pmatrix}
0 & 0 & 1\\
0 & 1 & 0\\
1 & 0 & 0\\
\end{pmatrix},
\end{equation}

$$
\sigma_1\sigma_2 \leftrightarrow \begin{pmatrix}
0 & 1 & 0\\
0 & 0 & 1\\
1 & 0 & 0\\
\end{pmatrix}, ~~ \sigma_2\sigma_1 \leftrightarrow \begin{pmatrix}
0 & 0 & 1\\
1 & 0 & 0\\
0 & 1 & 0\\
\end{pmatrix},  ~~ \sigma_1\sigma_2\sigma_1 \leftrightarrow \begin{pmatrix}
1 & 0 & 0\\
0 & 0 & 1\\
0 & 1 & 0\\
\end{pmatrix}.
$$
This is an example of a faithful representation of ${\cal S}_3$ by $3\times3$ permutation 
matrices. Analogous representation of ${\cal S}_n$ exists for every $n\in\natu$ 
(see, for example, \cite{Sagan}, \S 1.3). The Riordan group over $\comp$ consists of 
infinite lower triangular matrices and has plenty of involutions, so a natural question arises: 
what are faithful representations of ${\cal S}_n$ by such Riordan arrays? Surprisingly, it turns 
out that when $n\geq 3$, there are none. It will follow from our theorem proven below, that 
for $n\geq 3$, ${\cal S}_n$ can not be embedded into the Riordan group over $\comp$.
But first note, that since $\real$ has only two roots of unity, $\pm1$,
the only real-valued Riordan arrays  of finite order, which are different from 
the identity, are the elements of order 2, and ${\cal R}(\real)$ can not contain an element $r\neq I$, 
such that $r^3 = I$. However, if we consider the Riordan arrays over $\comp$, 
then using the complex primitive roots of unity, one easily generalizes the idea of the alternating 
Pascal's matrix, and produces an element of arbitrary finite order (see \cite{CK2}). 
Here is an example suggested by an anonymous referee. Take two arbitrary multiplicative 
subgroups of $\comp\setminus\{0\}$, say $G$ and $H$. Then we can embed the direct 
product of these two groups into ${\cal R}(\comp)$ as 
$$
G\times H \cong\{(a,\,bt)\in {\cal R}(\comp) ~ | ~ a\in G, b\in H\}.
$$
The element $(a,b)\in G\times H$ is represented by the 
infinite diagonal matrix with the numbers $\{a,ab,ab^2,\ldots,ab^n,\ldots\}$ 
along the main diagonal. In particular, this way we can embed the torus 
$S^1\times S^1$ and the cylinder $S^1\times \real$, as well as numerous finite groups.

In the view of symmetry of the products, we are also inspired by the palindromes of 
(Riordan) involutions and pseudo-involutions to consider the identification of the 
product of three involutions in a palindromic form. For instance, taking two 
involutions $\sigma_1$ and $\sigma_2$, it is immediate that the palindromes 
$\sigma_1\sigma_2\sigma_1$ and $\sigma_2\sigma_1\sigma_2$ will also be involutions. 
The same statement holds true for the pseudo-involutions (see section 3 of \cite{HS}). 
For involutions $\sigma_1$ and $\sigma_2$, the identities $(\sigma_1\sigma_2)^3=1$ and
$\sigma_1\sigma_2\sigma_1 = \sigma_2\sigma_1\sigma_2$ are equivalent. Therefore, 
if we could find two non-commuting Riordan involutions $\sigma_1$ and $\sigma_2$ 
so that the palindromes $\sigma_1\sigma_2\sigma_1$ and $\sigma_2\sigma_1\sigma_2$ 
were equal, the presentation (\ref{S3}) would imply that $S_3$ is embeddable into the 
Riordan group. However, this case seems unlikely, if we try to find such involutions 
among the Pascal type Riordan arrays we have mentioned above (\ref{GenPascal}). 

In the next section, we will recall some preliminary knowledge about Coxeter groups, 
including the symmetric groups, dihedral groups, etc., and show how to embed 
two particular Coxeter groups (the Klein four-group and the infinite dihedral group) 
into ${\cal R(\comp)}$. We also prove that the embeddability of non-abelian Coxeter 
groups into the Riordan group ${\cal R(\comp)}$ does not hold in general.
In the last section, we show several examples of representations of the 
dihedral group ${\cal D}_n$ as a subgroup of ${\cal R}(\inte_n)$, and also 
classify degree-2 representations of the symmetric group ${\cal S}_3$ as 
a subgroup of the truncated Riordan group ${\cal R}_2(\inte_3)$. We will prove that 
there are exactly two such representations, one of which is equivalent to 
the degree-2 representation of ${\cal S}_3$ reduced from the standard degree-3 
representation by permutation matrices over $\inte_3$. 
A few open questions are offered at the end of this section.


\section{Embeddability into ${\mathcal R}(\comp)$}

A Coxeter group $G$ can be introduced using generators and defining relations as 
\begin{equation}
\label{Coxeter}
G\cong \langle r_1,\ldots, r_k ~|~ (r_ir_j)^{m_{ij}} = 1\rangle,
\end{equation}
where $m_{ii}=1$ and $m_{ij}=m_{ji}\geq 2$ is either an integer or $\infty$ for $i\not=j$. 
If there is no relation between $r_i$, and $r_j$, $(r_ir_j)^{m_{ij}}=1$ for any integer 
$m_{ij}\geq 2$, we assume $m_{ij} = \infty$. (see \S 5.1 of \cite{Humphreys}). 
In particular, the symmetric group ${\cal S}_n$ of order $n!$, 
and the dihedral group ${\cal D}_n$ of order $2n$ are examples of Coxeter groups. 
Recall that ${\cal S}_n$ has a presentation (\ref{Sn}), and ${\cal D}_n$ has a presentation
\begin{equation}
\label{SnDn}
{\cal D}_n \cong \langle r_1, r_2 ~|~ r_1^2 = r_2^2 = 1, ~ (r_1r_2)^n = 1\rangle.
\end{equation}
The relations $(r_ir_i)^{1} = 1$ in (\ref{Coxeter}), mean that all the generators of $G$ are involutions, 
and $G$ is commutative if and only if $m_{ij}\leq 2$, for all $i\neq j$. Thus, if $m_{ij} > 2$, the involutions 
$r_i$ and $r_j$ do not commute, while if we have a group $G$ where 
all $m_{ij}\leq 2$, then the group is commutative. For example, the Klein four-group 
$$
K_4 = {\cal D}_2 = \langle r_1, r_2 ~|~ r_1^2 = r_2^2 = 1, ~ (r_1r_2)^2 = 1\rangle \cong C_2\times C_2
$$
is a Coxeter group ($C_2$ stands for the cyclic group of order two) and can be 
embedded into the Riordan group ${\cal R(\comp)}$ using the Riordan arrays 
$$
r_1:= P_0 = (1,\,-t),~~~ \mbox{and} ~~~ r_2 := (-1\,,t).
$$
Another example of a Coxeter group, which can be embedded into ${\cal R(\comp)}$ is the 
infinite dihedral group ${\cal D}_{\infty}$. This group is isomorphic to the semidirect 
product $\inte\rtimes C_2$ (see \S 2.2 of \cite{Robinson}), and has a presentation
$$
{\cal D}_{\infty} \cong \langle s, t ~|~ s^2 = t^2 = 1 \rangle.
$$
We prove a bit more general result. Let ${\mathbb D}\subset \comp$ be an integral 
domain, denote by ${\mathbb D}^*$ the multiplicative group of units of $\mathbb D$, and consider 
the subgroup $\Delta(2,{\mathbb D})$ of ${\rm GL}(2,{\mathbb D})$ consisting of the matrices
$$
\begin{pmatrix}
1 & 0\\
b & a\\
\end{pmatrix}, ~~ \mbox{where} ~~ b\in {\mathbb D}, \, a\in {\mathbb D}^*.
$$
Define a map from $\Delta(2,{\mathbb D})$ into the Lagrange subgroup $\L$ of the 
Riordan group $\R(\comp)$ by
\begin{equation}
\label{monom1}
f:  \begin{pmatrix}
1 & 0\\
b & a\\
\end{pmatrix} \longmapsto 
\left(1,\, \frac{ax}{1 + bx}\right)\in \L.
\end{equation}
It is easy to check that this $f$ is a monomorphism. Next lemma shows that $\Delta(2,{\mathbb D})$ 
is isomorphic to a semidirect product of ${\mathbb D}$  and ${\mathbb D}^*$.

\begin{lemma}
\label{lem:1}
$\Delta(2,{\mathbb D})$ is isomorphic to the semidirect product of the additive group 
${\mathbb D}$ and multiplicative group of units ${\mathbb D}^*$, that is 
$$
\Delta(2,{\mathbb D}) \cong {\mathbb D} \rtimes_{\varphi} {\mathbb D}^*,
$$
where $\varphi: {\mathbb D}^*\to Aut({\mathbb D})$ is a homomorphism 
defined by $\varphi(a)(b) = \varphi_a(b) = ab$.
\end{lemma}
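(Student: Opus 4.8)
The plan is to write down the evident candidate map and verify it is a bijective homomorphism by comparing the two group laws. First I would record the multiplication in $\Delta(2,{\mathbb D})$ by a direct computation:
$$
\begin{pmatrix} 1 & 0 \\ b_1 & a_1 \end{pmatrix}
\begin{pmatrix} 1 & 0 \\ b_2 & a_2 \end{pmatrix}
=
\begin{pmatrix} 1 & 0 \\ b_1 + a_1 b_2 & a_1 a_2 \end{pmatrix}.
$$
Since $a_1 a_2 \in {\mathbb D}^*$, this already shows closure; together with the observation that the identity matrix lies in $\Delta(2,{\mathbb D})$ and that
$$
\begin{pmatrix} 1 & 0 \\ b & a \end{pmatrix}^{-1} = \begin{pmatrix} 1 & 0 \\ -a^{-1}b & a^{-1} \end{pmatrix} \in \Delta(2,{\mathbb D}),
$$
it reconfirms that $\Delta(2,{\mathbb D})$ is a subgroup of ${\rm GL}(2,{\mathbb D})$.

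Second, I would check that $\varphi$ is well defined, i.e. that $\varphi_a(b) = ab$ really gives an element of $Aut({\mathbb D})$ --- here automorphisms of the underlying additive group of ${\mathbb D}$. Additivity of $\varphi_a$ is distributivity; bijectivity uses precisely that $a \in {\mathbb D}^*$, with $\varphi_{a^{-1}}$ a two-sided inverse (an arbitrary nonzero $a$ in a domain would only give an injective endomorphism). Associativity of multiplication in ${\mathbb D}$ yields $\varphi_{a_1 a_2} = \varphi_{a_1}\circ\varphi_{a_2}$, so $\varphi$ is a group homomorphism and the semidirect product ${\mathbb D}\rtimes_\varphi{\mathbb D}^*$ is defined, with law $(b_1,a_1)(b_2,a_2) = (b_1 + a_1 b_2,\, a_1 a_2)$.

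Finally, comparing this law with the matrix product above shows that
$$
\Phi:\ \begin{pmatrix} 1 & 0 \\ b & a \end{pmatrix} \longmapsto (b,a)
$$
is a homomorphism; it is manifestly a bijection, with inverse $(b,a)\mapsto\left(\begin{smallmatrix}1&0\\ b&a\end{smallmatrix}\right)$, hence an isomorphism. Equivalently, one could argue internally: the subgroup of matrices with $a=1$ is normal and isomorphic to $({\mathbb D},+)$, the subgroup of diagonal matrices is isomorphic to ${\mathbb D}^*$ and forms a complement meeting it trivially, and conjugation of the former by the latter realizes $\varphi$. Either way there is no genuine obstacle; the only point where the hypotheses are actually used is the passage from ``$a$ a unit'' to ``$\varphi_a$ an automorphism,'' and everything else is routine $2\times2$ matrix bookkeeping.
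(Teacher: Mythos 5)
Your proof is correct and follows essentially the same route as the paper: identify the matrix $\left(\begin{smallmatrix}1&0\\ b&a\end{smallmatrix}\right)$ with the pair $(b,a)$ and check that the matrix product $\left(\begin{smallmatrix}1&0\\ b_1+a_1b_2&a_1a_2\end{smallmatrix}\right)$ matches the semidirect product law. You are somewhat more careful than the paper in explicitly verifying that $\varphi_a$ is an automorphism and that $\varphi$ is a homomorphism, but this is only a matter of detail, not of approach.
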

\begin{proof}
The binary operation $\circ$ on ${\mathbb D} \rtimes_{\varphi} {\mathbb D}^*$ is defined by 
\begin{equation}
\label{BinOp}
(b, a)\circ (y, x) = (b + \varphi_a(y), ax) = (b + ay, ax).
\end{equation}
Since we clearly have 
$$
{\mathbb D} \cong  \left\{
\begin{pmatrix}
1 & 0\\
b & 1\\
\end{pmatrix} 
~|~ b\in {\mathbb D}\right\} \lhd \Delta(2,{\mathbb D}), ~~~ \mbox{and} ~~~ {\mathbb D}^* \cong  \left\{
\begin{pmatrix}
1 & 0\\
0 & a\\
\end{pmatrix} 
~|~ a\in {\mathbb D}^* \right\},
$$
it is natural to associate the matrix 
$$
\begin{pmatrix}
1 & 0\\
b & a\\
\end{pmatrix}
$$
with the pair $(b,a)\in {\mathbb D} \rtimes_{\varphi} {\mathbb D}^*$. Then the automorphism 
$\varphi_a: {\mathbb D}\to {\mathbb D}$ corresponding to $a\in {\mathbb D}^*$ will be written as
$$
\varphi_a: 
\begin{pmatrix}
1 & 0\\
x & 1\\
\end{pmatrix} \to
\begin{pmatrix}
1 & 0\\
ax & 1\\
\end{pmatrix},
$$
and we can write the binary operation (\ref{BinOp}) in terms of the matrices as
$$
\left(\begin{pmatrix}
1 & 0\\
b & 1\\
\end{pmatrix},
\begin{pmatrix}
1 & 0\\
0 & a\\
\end{pmatrix}\right ) \circ
\left(\begin{pmatrix}
1 & 0\\
y & 1\\
\end{pmatrix},
\begin{pmatrix}
1 & 0\\
0 & x\\
\end{pmatrix}\right ) = 
\left(\begin{pmatrix}
1 & 0\\
b + ay & 1\\
\end{pmatrix},
\begin{pmatrix}
1 & 0\\
0 & ax\\
\end{pmatrix}\right ).
$$
Since in $\Delta(2,{\mathbb D})$ we have 
$$ 
\begin{pmatrix}
1 & 0\\
b & a\\
\end{pmatrix}\cdot \begin{pmatrix}
1 & 0\\
y & x\\
\end{pmatrix} = 
\begin{pmatrix}
1 & 0\\
b + ay & ax\\
\end{pmatrix},
$$
the proof is finished.
\end{proof}

\begin{Corollary}
There is a faithful representation of the infinite dihedral group ${\cal D}_{\infty}$ by Riordan arrays 
with complex coefficients.
\end{Corollary}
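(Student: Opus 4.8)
The plan is to realize $\mathcal{D}_{\infty}$ inside $\Delta(2,\mathbb{D})$ for a suitable choice of $\mathbb{D}$, and then push it into $\mathcal{R}(\comp)$ via the monomorphism $f$ of (\ref{monom1}). The natural candidate is $\mathbb{D} = \inte$, so that $\mathbb{D}^* = \{\pm 1\}$. By Lemma \ref{lem:1} we have $\Delta(2,\inte) \cong \inte \rtimes_{\varphi} C_2$, where $C_2$ acts on $\inte$ by negation; but this is exactly the standard description of $\mathcal{D}_{\infty}$ (cited in the excerpt via \S 2.2 of \cite{Robinson}). So $\Delta(2,\inte) \cong \mathcal{D}_{\infty}$, and composing this isomorphism with $f$ gives a monomorphism $\mathcal{D}_{\infty} \hookrightarrow \mathcal{L} \subset \mathcal{R}(\comp)$.

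First I would exhibit explicit generators. The two involutions $s, t$ of $\mathcal{D}_{\infty}$ should map to two involutions in $\Delta(2,\inte)$; the evident choices are
$$
\begin{pmatrix} 1 & 0 \\ 0 & -1 \end{pmatrix} \quad \text{and} \quad \begin{pmatrix} 1 & 0 \\ 1 & -1 \end{pmatrix},
$$
both visibly of order $2$. One checks that their product
$$
\begin{pmatrix} 1 & 0 \\ 0 & -1 \end{pmatrix}\begin{pmatrix} 1 & 0 \\ 1 & -1 \end{pmatrix} = \begin{pmatrix} 1 & 0 \\ -1 & 1 \end{pmatrix}
$$
has infinite order (its powers are $\left(\begin{smallmatrix} 1 & 0 \\ -n & 1 \end{smallmatrix}\right)$), so the subgroup they generate is an epimorphic image of $\mathcal{D}_{\infty} = \langle s,t \mid s^2 = t^2 = 1\rangle$ in which the "rotation" $st$ has infinite order; since the only proper quotients of $\mathcal{D}_{\infty}$ collapse $st$ to finite order, the map is injective and $\langle s, t\rangle \cong \mathcal{D}_{\infty}$. (Alternatively, just invoke Lemma \ref{lem:1} directly, identifying $\mathcal{D}_{\infty} \cong \inte \rtimes C_2$.)

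Then I would transport everything through $f$: the images are the Riordan arrays $f\!\left(\begin{smallmatrix} 1 & 0 \\ 0 & -1 \end{smallmatrix}\right) = (1,\, -x)$ and $f\!\left(\begin{smallmatrix} 1 & 0 \\ 1 & -1 \end{smallmatrix}\right) = \left(1,\, \tfrac{-x}{1+x}\right)$, both elements of order $2$ in $\mathcal{L}$, and since $f$ is an injective homomorphism the subgroup of $\mathcal{R}(\comp)$ they generate is isomorphic to $\mathcal{D}_{\infty}$. That yields the desired faithful representation. There is no real obstacle here — the only point requiring a word of care is the injectivity argument for $\langle s,t\rangle \cong \mathcal{D}_{\infty}$, i.e. confirming that the rotation element has genuinely infinite order in the image (so no unwanted relation is introduced), which is immediate from the matrix computation above; everything else is bookkeeping that follows from Lemma \ref{lem:1} and the fact that $f$ is a monomorphism.
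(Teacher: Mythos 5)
Your proof is correct and follows the same route as the paper: take $\mathbb{D}=\inte$, identify $\Delta(2,\inte)\cong\inte\rtimes_{\varphi}\{\pm1\}\cong{\cal D}_{\infty}$ via Lemma \ref{lem:1}, and push it into $\cal L$ through the monomorphism $f$ of (\ref{monom1}). The explicit generators, the verification that their product has infinite order, and the images $(1,\,-x)$ and $\bigl(1,\,-x/(1+x)\bigr)$ are all accurate details that the paper leaves implicit.
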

\begin{proof}
Take ${\mathbb D} =\inte$ and apply the monomorphism $f$ defined in (\ref{monom1}) to 
the group $\Delta(2,\inte)\cong \mathbb {\cal D}_{\infty}$.
\end{proof}

Our next result shows in particular, that finite non-commutative Coxeter groups in general, 
can not be embedded into ${\cal R(\comp)}$.

\begin{theorem}
Let $G$ be a Coxeter group with at least two generators 
$r_1$ and $r_2$, such that their product $r_1r_2$ has a finite order $m\geq 3$, i.e. 
$(r_1r_2)^m = 1$. Then there exists no monomorphism
$$
\mu: G \hookrightarrow {\cal R(\comp)}.
$$
\end{theorem}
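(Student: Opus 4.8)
The plan is to show that any two Riordan involutions $\mu(r_1)$ and $\mu(r_2)$ must commute, which would force $(r_1r_2)^m=1$ to collapse to $(r_1r_2)^2=1$, contradicting $m\geq 3$ and the faithfulness of $\mu$. First I would recall the structure of involutions in $\R(\comp)$: if $(g,f)$ is an involution then $f=\bar f$ and $g\cdot(g\circ f)=1$. Writing $f(t)=\sum_{k\geq 1}f_kt^k$ with $f_1\neq 0$, the condition $f(f(t))=t$ forces $f_1^2=1$, so $f_1=\pm 1$. If $f_1=1$, a standard argument (comparing coefficients in $f(f(t))=t$ order by order) shows $f(t)=t$, so the involution lies in the Appell subgroup $\A$ and has the form $(g,t)$ with $g(t)\,g(t)=1$, i.e. $g\equiv \pm 1$ since $g(0)\neq 0$ and $g^2=1$ in $\F_0$; thus the only Appell involution is $(-1,t)$, which is central. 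Hence a \emph{non-central} involution must have $f_1=-1$.

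Next I would analyze a non-central involution $(g,f)$ with $f_1=-1$. The key computation is to conjugate by $(1,-t)$: set $\tilde f(t)=-f(-t)$, which now has $\tilde f_1=1$, and observe that $(1,-t)(g,f)(1,-t)=(g(-t),-f(-t))$ is again an involution, of the form "pseudo-involution type." The cleaner route is to note that $f(\bar f(t))=t$ together with $f_1=-1$ severely restricts $f$: writing $f(t)=-t+\sum_{k\geq 2}c_kt^k$ and imposing $f\circ f=\mathrm{id}$ gives a triangular system in the $c_k$ that, crucially, has a \emph{one-parameter family} of solutions at each stage only in a controlled way — in fact the generalized Pascal maps $P_b=\bigl(\tfrac{1}{1-bt},\tfrac{-t}{1-bt}\bigr)$ and their conjugates essentially exhaust the "simple" ones, but more importantly, for the theorem we only need the following: any involution with $f_1=-1$ is conjugate in $\R(\comp)$ to one of a very restricted type. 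The honest approach is: reduce mod the Appell subgroup. Since $\A\lhd\R(\comp)$ and $\R(\comp)/\A\cong\L\cong$ (group of $f\in\F_1$ under composition), the image of $r_1r_2$ in the quotient satisfies $(\bar f_1\circ\bar f_2)^{m}=\mathrm{id}$. In the composition group, any involution $\bar f$ with $\bar f'(0)=-1$ is conjugate to $-t$ (this is a classical normal-form fact: an analytic involution of $(\comp,0)$ with derivative $-1$ is linearizable), and an involution with derivative $+1$ is the identity. So in $\L/(\text{nothing})$ both $\bar f_1,\bar f_2$ are conjugate to $-t$; the subgroup they generate is a quotient of $\mathcal D_\infty$, but a dihedral subgroup of the composition group all of whose elements of the form $\bar f_1\bar f_2$ have finite order $\geq 3$ would give a finite dihedral subgroup, and one checks such a subgroup of $\mathrm{Diff}(\comp,0)$ is abelian — forcing $\bar f_1\bar f_2=\mathrm{id}$ in the quotient, i.e. $f_1=\bar f_2$ up to the relation, collapsing the order to $\leq 2$.

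To make the last step rigorous and self-contained I would instead argue directly with power series, which I expect to be \textbf{the main obstacle}: one must rule out the existence of two non-commuting Riordan involutions $A=(g_1,f_1)$, $B=(g_2,f_2)$ with $(AB)^m=I$, $m\geq 3$. The cleanest device: consider the \emph{linear part} (the $2\times 2$ truncation, i.e. the action on $t$ modulo $t^3$, or really the map to the truncated Riordan group). An involution truncates to a $2\times2$ matrix $M$ with $M^2=I$ and, by the above, either $M=I$, $M=-I$, or $M$ has eigenvalues $\{1,-1\}$. If $AB$ has order exactly $m\geq 3$ then its truncation has order dividing $m$; but a product of two $2\times2$ involutions-with-$\pm1$-eigenvalues is conjugate into $\mathrm{GL}(2,\comp)$ and its order being $\geq 3$ forces the truncations of $A,B$ themselves to be proportional to $\mathrm{diag}(1,-1)$ and hence \emph{equal}, so the truncation of $AB$ is $I$ — meaning $f_1\equiv f_2 \pmod{t^3}$ and $g_1\equiv g_2\pmod{t^2}$. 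Bootstrapping this comparison to all orders via induction on the coefficient degree (using that $(AB)^m=I$ kills every higher obstruction, as the commutator $[A,B]$ lies deeper and deeper in the lower central-type filtration of the kernel of truncation, which is a \emph{torsion-free} pro-unipotent-type group over $\comp$) forces $A=B$, so $r_1=r_2$ under the faithful $\mu$, contradicting that $r_1,r_2$ are distinct Coxeter generators with $m_{12}=m\geq 3$. The technical heart is this torsion-freeness: the subgroup of $\R(\comp)$ of elements $\equiv I$ modulo $t^N$ has no nontrivial elements of finite order (over a field of characteristic $0$), so any relation $(AB)^m=I$ with $AB\not\equiv I$ is impossible once we have shown $AB\equiv I$ at the bottom level — and establishing the bottom-level coincidence of $A$ and $B$ from a $2\times 2$ computation is exactly where the hypothesis $m\geq 3$ (rather than $m=2$) is used.
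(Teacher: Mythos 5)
Your proposal ultimately rests on the same mechanism as the paper's proof: a non-central Riordan involution $(g,f)$ must have $f'(0)=-1$ (and $g(0)=\pm1$), so the product of two of them has $f$-component $t+c_qt^q+\cdots$, and over a field of characteristic $0$ such a series has infinite compositional order unless it is exactly $t$. That last fact --- which you defer to as ``the technical heart is this torsion-freeness'' --- is precisely what the paper takes from Jennings' theorem, via the identity $\bigl(t+c_qt^q+\cdots\bigr)^{(m)}=t+mc_qt^q+\cdots\neq t$; with it in hand the paper finishes in one line, with no need for your $2\times2$ truncation, the bootstrapping through the congruence filtration, or the middle paragraph about linearizing analytic involutions and finite dihedral subgroups of formal diffeomorphisms (which you rightly concede is not rigorous and which you should delete). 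Two things must be fixed for your version to close. First, the torsion-freeness of the congruence kernel cannot be left as an assertion: it is the entire content of the theorem, so you need the coefficient computation above together with the companion fact that $g^m=1$ with $g=1+g_jt^j+\cdots$, $g_j\neq0$, is impossible; once supplied, your route is complete (and in fact handles the degenerate case $f_1\circ f_2=t$, where the obstruction moves to the $g$-component, slightly more cleanly than the paper does). Second, your truncation step only gives $\pi_2(AB)=\pm I$, not $I$: when $g_1(0)=-g_2(0)$ the two involutions truncate to negatives of one another and the product truncates to $-I$; torsion-freeness then forces $AB=(-1,t)$, which is central of order $2$ and still contradicts $m\geq3$, but your ``hence equal'' step silently skips this case. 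With those repairs the argument is correct, though strictly longer than the direct application of Jennings' theorem to $f_1\circ f_2$.
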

\begin{proof}
Suppose there exists such a monomorphism $\mu$. We let $\sigma_i:= \mu(r_i)$, so 
$\sigma_i\in {\cal R}$ are the elements of the Riordan group ${\cal R}$, which satisfy the relations
\begin{equation}
\label{relations1}
\sigma_i^2 = (1,t), ~~ \mbox{and} ~~ (\sigma_1\sigma_2)^{m} = (1,t),  ~m\geq 3.
\end{equation}
Thus, we assume that there are two Riordan involutions the product of which has order 
$m\geq 3$. Take the f.p.s. $g_i(t)$ and $f_i(t)$ such that
$$
\sigma_i = \bigl(g_i(t),\,f_i(t)\bigr), ~i\in\{1,2\}.
$$
Using the group operation in ${\cal R}$ together with $\sigma_i^2 = 1$, we see that  
$f_i$ and $g_i$ satisfy 
\begin{equation}
\label{twoID}
f_i(f_i(t)) = t, ~~ \mbox{and} ~~ g_i(t)g_i(f_i(t)) = 1.
\end{equation}
Let us write each $f_i(t)$ as a series expansion 
$$
f_1(t) = \sum\limits_{i\geq 1} a_it^i, a_1\neq 0 ~~~\mbox{and}  ~~~ 
f_2(t) = \sum\limits_{i\geq 1} b_it^i, b_1\neq 0.
$$
In order to satisfy the first identity of (\ref{twoID}), we must have $a_1^2 = b_1^2 = 1$, and it is a simple 
math induction exercise to show that if $a_1 = 1$ then $f_1(t) = t$. This is an element of compositional 
order one, and for $\sigma_1$ to be a proper (i.e. not the identity) involution, we must have $\sigma_1 = (-1,\,t)$. But 
$ (-1,\,t)$ commutes with every element of ${\cal R}$, and can't be $\mu(r_1)$, since $r_1$ 
and $r_2$ do not commute. Therefore we can assume that $a_1 = b_1 = -1$, and write 
$$
f_1(t) = -t + a_kt^k + a_{k+1}t^{k+1} + \cdots, ~ \mbox{and} ~
f_2(t) = -t + b_lt^l + b_{l+1}t^{l+1} + \cdots, 
$$
where $a_kb_l\neq 0$. Hence $\exists q > 1$, such that
$$
f_1(f_2(t)) = t + c_qt^q + c_{q+1}t^{q+1} + \cdots 
$$
with $c_q \neq 0$. Now, according to Theorem 2.3 of Jennings \cite{Jennings}, 
for the composition $f_1(f_2(t))$ and arbitrary $m\in\natu$, we would have the $m$th 
compositional power 
\begin{equation}
\label{HighOrder}
\bigl(f_1(f_2(t))\bigr)^{(m)} = t + mc_qt^q + (\mbox{higher powers of} ~ t)  \neq t.
\end{equation}
It implies that the identity $(f_1\circ f_2)^{(m)} = Id$ required for (\ref{relations1}) is impossible, 
and finishes the proof by contradiction. Note that \eqref{HighOrder} can be proved 
directly by using induction on $m$.
\end{proof}

Here are a few corollaries of this result (compare the first one with Corollary 2.4 of 
Jennings \cite{Jennings}).

\begin{Corollary}
A product of two non-commuting Riordan involutions over $\comp$ has infinite order.
\end{Corollary}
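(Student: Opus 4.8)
The plan is to reduce the statement directly to the Theorem. Suppose $\sigma_1,\sigma_2\in{\cal R}(\comp)$ are non-commuting involutions and, toward a contradiction, that $\sigma_1\sigma_2$ has finite order $m$. The first step is to rule out $m\leq 2$: if $m=1$ then $\sigma_2=\sigma_1^{-1}=\sigma_1$, so the two involutions coincide and in particular commute; and if $m=2$ then $\sigma_1\sigma_2\sigma_1\sigma_2=I$, so multiplying this relation on the right by $\sigma_2$ and on the left by $\sigma_1$ (and using $\sigma_i^2=I$) yields $\sigma_2\sigma_1=\sigma_1\sigma_2$, again forcing commutativity. Hence non-commutativity guarantees $m\geq 3$.

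The second step is to build a monomorphism from the dihedral group ${\cal D}_m=\langle r_1,r_2\mid r_1^2=r_2^2=(r_1r_2)^m=1\rangle$ into ${\cal R}(\comp)$. Since $\sigma_1$ and $\sigma_2$ satisfy exactly these defining relations, the assignment $r_i\mapsto\sigma_i$ extends, by the universal property of the presentation, to a homomorphism $\mu\colon{\cal D}_m\to{\cal R}(\comp)$ with image $\langle\sigma_1,\sigma_2\rangle$. To check injectivity, recall that ${\cal D}_m$ has the $2m$ elements $(r_1r_2)^k$ and $r_1(r_1r_2)^k$, $0\leq k<m$, so the images under $\mu$ are $(\sigma_1\sigma_2)^k$ and $\sigma_1(\sigma_1\sigma_2)^k$. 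The powers $(\sigma_1\sigma_2)^k$ are pairwise distinct because $\sigma_1\sigma_2$ has order exactly $m$; the equality $\sigma_1(\sigma_1\sigma_2)^k=\sigma_1(\sigma_1\sigma_2)^{k'}$ forces $k\equiv k'\pmod m$ after cancelling $\sigma_1$; and an equality $\sigma_1(\sigma_1\sigma_2)^k=(\sigma_1\sigma_2)^j$ would place $\sigma_1$ in the cyclic group $\langle\sigma_1\sigma_2\rangle$, making $\sigma_1$ commute with $\sigma_1\sigma_2$, hence with $\sigma_1^{-1}(\sigma_1\sigma_2)=\sigma_2$ — which is excluded. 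Thus all $2m$ images are distinct, $|\langle\sigma_1,\sigma_2\rangle|=2m$, and $\mu$ is a monomorphism.

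The third step is the conclusion: ${\cal D}_m$ is a Coxeter group with two generators $r_1,r_2$ whose product has order $m\geq 3$, so by the Theorem no monomorphism ${\cal D}_m\hookrightarrow{\cal R}(\comp)$ exists — contradicting the $\mu$ just constructed, and hence $\sigma_1\sigma_2$ has infinite order. I expect the only delicate point to be the injectivity of $\mu$, i.e. verifying that $\langle\sigma_1,\sigma_2\rangle$ is a genuine copy of ${\cal D}_m$ rather than a proper quotient; this is exactly what is supplied by the two facts that $\sigma_1\sigma_2$ has order precisely $m$ and that $\sigma_1\notin\langle\sigma_1\sigma_2\rangle$. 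As an alternative that avoids the Theorem, one can argue in the style of its proof: writing $\sigma_i=(g_i,f_i)$, non-commutativity forces each $f_i$ to have linear term $-t$ (otherwise $f_i(t)=t$ and $\sigma_i=(-1,t)$ is central), so the $f$-component $f_2\circ f_1$ of $\sigma_1\sigma_2$ has linear term $t$; if $f_2\circ f_1\neq t$ then the compositional-power argument used in the proof of the Theorem gives $(f_2\circ f_1)^{(m)}\neq t$ for every $m\geq 1$, while if $f_2\circ f_1=t$ then $f_1=f_2$ and $\sigma_1\sigma_2=(h,t)$ is an Appell array whose finiteness of order would force $h$ to be a constant root of unity, making $\sigma_1\sigma_2$ central and $\sigma_1,\sigma_2$ commuting.
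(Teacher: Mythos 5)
Your proof is correct and matches the paper's intent: the corollary is stated there without proof as an immediate consequence of the Theorem, and your reduction via the dihedral group ${\cal D}_m$ (after ruling out $m\leq 2$ and checking that $\langle\sigma_1,\sigma_2\rangle$ is a faithful copy of ${\cal D}_m$) supplies exactly the details the authors leave implicit. Your alternative direct argument is also sound, and it even handles the case $f_1\circ f_2=t$ with $f_1=f_2\neq t$, which the Theorem's proof passes over when it asserts the existence of a nonzero coefficient $c_q$.
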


\begin{Corollary}
For any $n\geq 3$, neither ${\cal S}_n$ nor ${\cal D}_n$ can be 
embedded into ${\cal R(\comp)}$.
\end{Corollary}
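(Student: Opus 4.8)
\emph{Proof plan.} The idea is to reduce immediately to the Theorem just proved: for each of these groups with $n\ge 3$ we point to two of its Coxeter generators whose product has finite order $m\ge 3$, and then the Theorem forbids any monomorphism into ${\cal R}(\comp)$. For ${\cal S}_n$ I would invoke the presentation (\ref{Sn}); since $n\ge 3$ there are at least two generators $\sigma_1,\sigma_2$, each an involution, satisfying $(\sigma_1\sigma_2)^3=1$. Concretely $\sigma_1\sigma_2=(1\,2)(2\,3)$ is a $3$-cycle, so its order is exactly $3$, and in particular $\sigma_1$ and $\sigma_2$ do not commute. For ${\cal D}_n$ I would use the presentation (\ref{SnDn}): the two generating reflections $r_1,r_2$ are involutions with $(r_1r_2)^n=1$, and $r_1r_2$, the rotation generator, has order exactly $n\ge 3$.

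With these choices the hypotheses of the Theorem hold --- a Coxeter group with at least two generators whose product has finite order $m\ge 3$, namely $m=3$ for ${\cal S}_n$ and $m=n$ for ${\cal D}_n$ --- so the Theorem yields that there is no monomorphism ${\cal S}_n\hookrightarrow{\cal R}(\comp)$ and no monomorphism ${\cal D}_n\hookrightarrow{\cal R}(\comp)$. Since an embedding of a group into ${\cal R}(\comp)$ is precisely such a monomorphism, neither ${\cal S}_n$ nor ${\cal D}_n$ embeds, which is the assertion.

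I do not expect any genuine obstacle here: the mathematical content is entirely contained in the Theorem, and this statement is a formal consequence of it. The one point to state with care is that the chosen product of generators must actually have order at least $3$ --- not merely that some power of it is trivial, a subtlety the proof of the Theorem uses through the non-commutativity of $r_1,r_2$ --- and for the standard Coxeter generators of ${\cal S}_n$ and ${\cal D}_n$ with $n\ge 3$ this is immediate. It is worth noting that ${\cal S}_n$ and ${\cal D}_n$ with $n\ge 3$ are exactly the non-abelian groups in these two families, consistent with the remark preceding the Theorem that non-abelian Coxeter groups in general do not embed into ${\cal R}(\comp)$, whereas the abelian degenerations ${\cal S}_2\cong C_2$ and ${\cal D}_2=K_4$ do embed, as was shown above.
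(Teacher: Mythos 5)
Your proposal is correct and matches the paper's (implicit) argument: the paper states this corollary without proof as an immediate consequence of the Theorem, using exactly the presentations (\ref{Sn}) and (\ref{SnDn}) you cite. Your added care in checking that $\sigma_1\sigma_2$ and $r_1r_2$ have order exactly $3$ and exactly $n$, respectively, is a reasonable bit of diligence but changes nothing of substance.
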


\begin{Corollary}
The projective linear group ${\rm PGL}(2,\inte)$ can not be embedded into the Riordan group 
${\cal R(\comp)}$.
\end{Corollary}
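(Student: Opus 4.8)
The plan is to deduce this from the Theorem above. The key point is purely group-theoretic: the extended modular group ${\rm PGL}(2,\inte)$ is the full $(2,3,\infty)$ triangle group, hence a Coxeter group, with presentation
$$
{\rm PGL}(2,\inte)\;\cong\;\langle\, a,b,c \ \mid\ a^2=b^2=c^2=(ab)^2=(bc)^3=1 \,\rangle ,
$$
where no defining relation involves $ac$ (i.e.\ $m_{ac}=\infty$). Here the generators $b$ and $c$ are involutions whose product $bc$ has finite order $m=3\geq 3$, so the Theorem applies verbatim with $G={\rm PGL}(2,\inte)$ (taking $r_1=b$, $r_2=c$) and yields that there is no monomorphism ${\rm PGL}(2,\inte)\hookrightarrow\R(\comp)$.

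To make the argument independent of the quoted presentation of the extended modular group, I would alternatively exhibit an explicit copy of ${\cal S}_3$ inside ${\rm PGL}(2,\inte)$ and invoke the Corollary above stating that ${\cal S}_n$ does not embed into $\R(\comp)$ for $n\geq 3$, together with the trivial fact that a monomorphism $G\hookrightarrow\R(\comp)$ restricts to a monomorphism on every subgroup of $G$ (so non-embeddability of a subgroup forces non-embeddability of the group). Set
$$
A=\begin{pmatrix} 0 & -1 \\ 1 & -1 \end{pmatrix},\qquad B=\begin{pmatrix} 0 & 1 \\ 1 & 0 \end{pmatrix}\in{\rm GL}(2,\inte) .
$$
A short computation gives $A^3=I$, $B^2=I$ and $BAB^{-1}=A^{-1}$, so $\langle A,B\rangle\cong{\cal S}_3$; the six matrices $I,A,A^2,B,AB,A^2B$ are pairwise distinct and none of them equals $-I$, so $\langle A,B\rangle$ meets the kernel $\{\pm I\}$ of the projection ${\rm GL}(2,\inte)\to{\rm PGL}(2,\inte)$ only in $I$ and therefore maps isomorphically onto a copy of ${\cal S}_3$ in ${\rm PGL}(2,\inte)$. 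Since that ${\cal S}_3$ does not embed into $\R(\comp)$, neither does ${\rm PGL}(2,\inte)$.

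The only step carrying any real content is the first: recalling (or re-deriving) that ${\rm PGL}(2,\inte)$ is the $(2,3,\infty)$ triangle group, or, in the second route, checking that the displayed $A$ and $B$ generate a copy of ${\cal S}_3$ that survives passage to ${\rm PGL}(2,\inte)$. Everything after that is a one-line appeal to the Theorem (respectively to the ${\cal S}_n$-Corollary). I do not expect any further obstacle: the matrix identities $A^3=B^2=I$, $BAB^{-1}=A^{-1}$ and the verification $-I\notin\langle A,B\rangle$ are routine $2\times 2$ arithmetic, and the inheritance of non-embeddability by overgroups is immediate from the definitions.
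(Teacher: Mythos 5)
Your first route is exactly the paper's proof: the authors likewise quote the Coxeter presentation of ${\rm PGL}(2,\inte)$ (from Humphreys, \S 5.1, Example 2, with relations $r_i^2=(r_1r_2)^3=(r_1r_3)^2=1$, which is your $(2,3,\infty)$ presentation up to relabeling the generators) and apply the Theorem to the pair of generators whose product has order $3$. So that part is correct and identical in substance. Your second route is a genuinely different and arguably more robust argument: instead of relying on the (cited, not re-derived) fact that ${\rm PGL}(2,\inte)$ is a Coxeter group, you exhibit an explicit subgroup isomorphic to ${\cal S}_3$ and invoke the earlier Corollary that ${\cal S}_3$ does not embed into ${\cal R}(\comp)$, together with the observation that non-embeddability passes from subgroups to overgroups. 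The matrix computations check out: $A^3=I$, $B^2=I$, $BAB^{-1}=A^2=A^{-1}$, and since the only determinant-$1$ elements of $\langle A,B\rangle$ are $I,A,A^2$, none of which is $-I$, the subgroup injects into ${\rm PGL}(2,\inte)$. What the paper's route buys is brevity (a one-line citation); what your second route buys is self-containedness — it needs no external reference and makes explicit the simple structural reason (a hidden ${\cal S}_3$) why the embedding must fail. Both are valid.
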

\begin{proof}
It follows from Example 2 in \S 5.1 of Humphreys \cite{Humphreys}, where it is shown that ${\rm PGL}(2,\inte)$ is 
isomorphic to a Coxeter group
$$
{\rm PGL}(2,\inte) \cong \langle r_1,r_2,r_3~|~ r_i^2 = (r_1r_2)^3 = (r_1r_3)^2=1\rangle.
$$
\end{proof}

\noindent One may wonder here, if a product of more than two non-commuting Riordan 
involutions over $\comp$ can have a finite order. The generalized Pascal's matrices 
(\ref{GenPascal}) give the affirmative answer to this question. Since 
$\sum\limits_{k = 1}^{2n+1} (-1)^{k-1} k  = n+1$, the induction shows that the 
product of $2n+1$ Riordan involutions $P_b$ with $b\in\{1,2,\ldots, 2n+1\}$, 
$$
P_1\cdot P_2 \cdot\ldots \cdot P_{2n+1} = P_{n+1}
$$ 
is again the Riordan involution, so it has order $2$. Finally, the product of four non-commuting 
involutions can have order 1, since $P_1\cdot P_2\cdot P_5\cdot P_4 = (1,\,t)$.


\section{Embeddability into ${\mathcal R}(\inte_n)$}

Formula (\ref{HighOrder}) implies that if we had $mc_q = 0$, the identity 
$(f_1\circ f_2)^{(m)} = Id$ could be true. This observation suggests to consider 
the Riordan group with coefficients in the ring of integers modulo $n$, that is 
$\inte_n = \{0,1,\ldots,n-1\}$, and the so-called {\sl modular representations}. The elements of 
$\inte_n$ are the residue classes, but to simplify the notations we denote them by the 
corresponding integers. To have a group we need every element to be invertible, 
which means that we actually need to impose 
an extra condition on our elements $\bigl(g(t),\,f(t)\bigr)\in {\cal R}(\inte_n)$. We must 
require $g(0)$ and $f'(0)$ to be units of the ground ring. This condition will guarantee that the 
inverse $\bigl(g(t),\,f(t)\bigr)^{-1}\in {\cal R}(\inte_n)$ exists.
Our next result shows that for an arbitrary integer $n\geq 3$, the dihedral group of order $2n$ 
has many faithful representations in the Riordan group over $\inte_n$. 

\begin{theorem}
Fix two integers $n\geq 3$ and $s \geq 1$, and take arbitrary $a,b \in\{0,1,\ldots,n-1\}$, 
with $a^2 = 1$. Then the subgroup of ${\cal R}(\inte_n)$ generated 
by two Riordan matrices
\begin{equation}
\label{PascalZn}
r_1 = \left(\frac{a}{(1 + (b+1)t)^s},\,\frac{(n-1)t}{1+ (b+1)t}\right) ~ \mbox{and} ~ 
r_2 = \left(\frac{a}{(1 + bt)^s},\,\frac{(n-1)t}{1+ bt}\right)
\end{equation}
is isomorphic to the dihedral group ${\cal D}_n$ of order $2n$.
\end{theorem}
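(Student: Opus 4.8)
The plan is to verify the defining relations of $\mathcal{D}_n$ for the pair $r_1, r_2$, and then to argue that the resulting homomorphism from $\mathcal{D}_n$ is actually injective. First I would compute $r_1^2$ and $r_2^2$ using the Riordan product \eqref{operation1}. Both $r_1$ and $r_2$ have the shape $\bigl(a/(1+ct)^s,\, (n-1)t/(1+ct)\bigr)$ with $c = b+1$ and $c=b$ respectively; since $n-1 \equiv -1 \pmod n$, the associated series is $f(t) = -t/(1+ct)$, whose compositional inverse is easily seen to be $f$ itself (i.e.\ $f$ is a compositional involution), and $g(t)g(f(t)) = a^2/\bigl((1+ct)^s(1 - ct/(1+ct))^s\bigr) = a^2 = 1$ by the hypothesis $a^2 = 1$. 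Hence $r_1^2 = r_2^2 = (1,t)$, so both generators are involutions (or the identity; the degenerate cases will be handled at the injectivity stage).

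Next I would show $(r_1 r_2)^n = (1,t)$. The cleanest route is to pass through the matrix group $\Delta(2,\inte_n)$ of Lemma~\ref{lem:1}, or rather its image under the monomorphism analogous to \eqref{monom1}, extended to allow a nontrivial Appell part. Concretely, I would observe that elements of the form $\bigl(a/(1+ct)^s,\, \beta t/(1+ct)\bigr)$ with $a \in \inte_n^*$, $\beta \in \inte_n^*$, $c \in \inte_n$ constitute a subgroup of $\mathcal{R}(\inte_n)$ on which the product \eqref{operation1} mirrors multiplication of the affine-type matrices $\begin{pmatrix} 1 & 0 \\ c & \beta \end{pmatrix}$ together with a scalar bookkeeping term for $a$; in particular the associated (Lagrange) part $f_i(t) = -t/(1+c_i t)$ behaves under composition exactly like the Möbius transformations $t \mapsto -t/(1+c_i t)$, i.e.\ like the matrices $\pm\begin{pmatrix} -1 & 0 \\ c_i & 1 \end{pmatrix}$ acting by fractional linear substitution. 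Then $f_1 \circ f_2$ corresponds to a matrix whose fractional-linear action is a rotation-type element, and I expect $(f_1 \circ f_2)^{(n)} = t$ to reduce to a clean identity in $\inte_n$; the already-proved identity $r_i^2 = (1,t)$ plus $(r_1r_2)^n = (1,t)$ then gives, via the presentation \eqref{SnDn}, a surjective homomorphism $\mathcal{D}_n \twoheadrightarrow \langle r_1, r_2\rangle$.

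For injectivity, since $|\mathcal{D}_n| = 2n$, it suffices to show $r_1 r_2$ has order exactly $n$ (equivalently, $\langle r_1 r_2 \rangle$ has $n$ elements) and that $r_1 \notin \langle r_1 r_2\rangle$, or more simply that $\langle r_1, r_2\rangle$ has at least $2n$ elements. I would track the Lagrange components: the element $(r_1r_2)^{(k)}$ has associated series of the form $t/(1 + d_k t)$ for an explicit $d_k \in \inte_n$ (coming from the matrix power), and I would check that $d_k \ne 0$ for $1 \le k \le n-1$, so these $n$ powers are distinct; then $r_1$ contributes the reflection coset (its Lagrange part is $-t/(1+(b+1)t)$, of a different type than $t/(1+d t)$ unless things collapse), giving the full $2n$. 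The main obstacle will be this last bookkeeping: the Appell exponent $s$ and the constant $a$ muddy the correspondence with $2\times 2$ matrices, so I would need to confirm that when the Lagrange parts of two words agree, the Appell parts are forced to agree as well (so no extra collapsing occurs), and separately rule out the degenerate possibility $r_1 = (1,t)$ or $r_2 = (1,t)$, which would happen only if simultaneously $a = 1$, $b+1 \equiv 0$ (resp.\ $b \equiv 0$), and $s \cdot c \equiv 0$ — a case the hypotheses $n \ge 3$ together with choosing, say, generic $b$ are meant to avoid, or which must be explicitly excluded. I would finish by noting that all computations take place in the finite ring $\inte_n$, so formula \eqref{HighOrder} does not obstruct finite order here — the term $m c_q$ vanishes precisely when $n \mid m c_q$, which is exactly what makes $(r_1 r_2)^n = (1,t)$ possible.
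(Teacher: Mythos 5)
Your proposal is correct and, at the structural level, follows the same plan as the paper: verify that $r_1,r_2$ are involutions, show $r_1r_2$ has order $n$, and then exhibit $2n$ distinct elements so that the surjection coming from the presentation \eqref{SnDn} is an isomorphism. The genuine difference is in the middle step. The paper computes $r_1r_2=\bigl(1/(1+t)^s,\,t/(1+t)\bigr)$ directly and proves $(r_1r_2)^k=\bigl(1/(1+kt)^s,\,t/(1+kt)\bigr)$ by induction, so that $(r_1r_2)^n=(1,t)$ because $n\equiv 0$ in $\inte_n$; you instead route the computation through the matrix group $\Delta(2,\inte_n)$ of Lemma~\ref{lem:1} and the M\"obius action. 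That route does work, and in fact the ``bookkeeping'' you worry about is harmless: the arrays $\bigl(a/(1+ct)^s,\,\beta t/(1+ct)\bigr)$ with $a,\beta\in\inte_n^*$ form a subgroup of ${\cal R}(\inte_n)$ whose multiplication law is $(a_1,c_1,\beta_1)(a_2,c_2,\beta_2)=(a_1a_2,\;c_1+\beta_1c_2,\;\beta_1\beta_2)$, i.e.\ a homomorphic image of $\inte_n^*\times\Delta(2,\inte_n)$, so the Appell part rides along for free. This buys a conceptual explanation of why the induction closes up, at the cost of one extra closure verification. Two clean-ups to your final step: (i) the concern about collapsing in the Appell parts is moot, because the $2n$ words already have pairwise distinct Lagrange parts --- $t/(1+kt)$ for the $n$ rotations and $-t/(1+(b-k)t)$ for the $n$ reflections, which are separated by their linear coefficients $1\neq -1$ since $n\geq 3$ --- and distinct Lagrange parts force distinct Riordan arrays; (ii) the degenerate case $r_i=(1,t)$ never occurs for any choice of $a,b,s$, for the same reason that the linear coefficient of the Lagrange part of $r_i$ is $-1\neq 1$; no genericity assumption on $b$ is needed.
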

\begin{proof}
Using $nt = 0$ in $\inte_n$, a quick computation shows that each $r_i$ is an involution 
in ${\cal R}(\inte_n)$.  Similar computations give
$$
r_1r_2 = \left(\frac{1}{(1 + t)^s},\,\frac{t}{1 + t}\right) ~~ \mbox{and} ~~ 
r_2r_1 = \left(\frac{1}{(1 + (n-1)t)^s},\,\frac{t}{1 + (n-1)t}\right).
$$
In particular, we see that $r_1$ and $r_2$ do not commute, since $n\geq 3$. Moreover, the 
induction shows that for any $k\in\natu$,
$$
(r_1r_2)^k = \left(\frac{1}{(1 + kt)^s},\,\frac{t}{1 + kt}\right)
$$
and hence 
\begin{equation}
\label{R2R1R2}
r_2(r_1r_2)^k = \left(\frac{a}{(1 + (b-k)t)^s},\,\frac{(n-1)t}{1 + (b-k)t}\right),
\end{equation}
which implies that the product $r_1r_2$ has order $n$ in ${\cal R}(\inte_n)$. We also 
see from the last two equalities that $2n$ Riordan arrays 
$$
\{(r_1r_2)^k,~ r_2(r_1r_2)^k\},~k\in\{0,1,\ldots,n-1\}
$$
are all distinct. Thus, the subgroup generated by $r_1$ and 
$r_2$ is isomorphic to the group given by the presentation (\ref{SnDn}).
\end{proof}

\noindent The groups ${\cal D}_3$ and ${\cal S}_3$ are isomorphic, and we 
immediately obtain the following
\begin{Corollary}
The symmetric group ${\cal S}_3$ of degree 3 has a faithful representation 
by the Riordan arrays over the Galois field $\mathbb{GF}(3^q),\forall q\in\natu$.
\end{Corollary}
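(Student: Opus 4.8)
The plan is to deduce the statement from the Theorem just proved, via the classical isomorphism ${\cal S}_3\cong {\cal D}_3$ together with a one-line extension-of-scalars argument. First I would set $n=3$ in that Theorem: for any $s\geq 1$ and any $a,b\in\{0,1,2\}$ with $a^2=1$, the two Riordan arrays $r_1,r_2\in{\cal R}(\inte_3)$ displayed in (\ref{PascalZn}) generate a subgroup of ${\cal R}(\inte_3)$ isomorphic to ${\cal D}_3$, hence to ${\cal S}_3$ (the presentation being (\ref{S3})). This already exhibits a faithful representation of ${\cal S}_3$ over the prime field $\inte_3=\mathbb{GF}(3)$; varying $s$, $a$, $b$ in fact gives infinitely many of them.

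Second, I would pass from $\mathbb{GF}(3)$ to $\mathbb{GF}(3^q)$. Since $1\mid q$, the prime field $\inte_3=\mathbb{GF}(3)$ is a subfield of $\mathbb{GF}(3^q)$ for every $q\in\natu$, so the inclusion of coefficient rings induces an injective ring homomorphism $\inte_3[\![t]\!]\hookrightarrow\mathbb{GF}(3^q)[\![t]\!]$. This map sends units to units and power series of order $1$ to power series of order $1$, and it commutes with multiplication and composition of power series; hence by (\ref{operation1}) it induces a monomorphism of Riordan groups $\iota:{\cal R}(\inte_3)\hookrightarrow{\cal R}(\mathbb{GF}(3^q))$ (injectivity is clear, since a nonzero power series over $\inte_3$ remains nonzero over $\mathbb{GF}(3^q)$). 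Composing $\iota$ with the embedding ${\cal S}_3\cong{\cal D}_3\hookrightarrow{\cal R}(\inte_3)$ from the first step produces the desired faithful representation of ${\cal S}_3$ by Riordan arrays over $\mathbb{GF}(3^q)$.

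As a remark making the corollary self-contained, one can instead simply rerun the computation in the proof of the Theorem directly over $\mathbb{GF}(3^q)$: the only property of the ground ring used there is $nt=0$, i.e. that the characteristic divides $n$, and $\operatorname{char}\mathbb{GF}(3^q)=3$. Thus, with $n=3$, the same formulas show that $r_1,r_2$ are non-commuting involutions, that $(r_1r_2)^k=\bigl(1/(1+kt)^s,\ t/(1+kt)\bigr)$ and $r_2(r_1r_2)^k=\bigl(a/(1+(b-k)t)^s,\ 2t/(1+(b-k)t)\bigr)$, and that the $6$ arrays $\{(r_1r_2)^k,\ r_2(r_1r_2)^k\}_{k=0,1,2}$ are pairwise distinct, so they realize (\ref{S3}). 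There is essentially no obstacle: all the content sits in the Theorem already established, and the only point needing (minimal) care is that faithfulness survives enlarging the coefficient field, which is immediate because a subgroup of ${\cal R}(\inte_3)$ maps isomorphically onto its $\iota$-image in ${\cal R}(\mathbb{GF}(3^q))$.
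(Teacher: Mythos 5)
Your proposal is correct and follows the same route as the paper, which obtains the corollary immediately from the preceding theorem with $n=3$ via the isomorphism ${\cal D}_3\cong{\cal S}_3$. Your explicit extension-of-scalars step $\iota:{\cal R}(\inte_3)\hookrightarrow{\cal R}(\mathbb{GF}(3^q))$ is a welcome bit of extra care that the paper leaves implicit.
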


To be more specific, let us take $a = b = s = 1$ in (\ref{PascalZn}) and 
give a particular example of a faithful representation of ${\cal S}_3$ in terms of the 
Riordan arrays with coefficients in $\mathbb{GF}(3) = \inte_3$. Since ${\cal D}_3$ 
has three non-trivial involutions $\{r_2,~r_2r_1r_2,~r_2(r_1r_2)^2 = r_1\}$, to be in 
agreement with (\ref{S3Present}), we consider the following correspondence 
among the nonidentity elements: 
$$
r_2 \leftrightarrow \sigma_1,~ r_1 \leftrightarrow \sigma_2, ~ r_2r_1r_2 \leftrightarrow 
\sigma_1\sigma_2\sigma_1,~ r_1r_2 \leftrightarrow \sigma_2\sigma_1, ~ 
r_2r_1 \leftrightarrow \sigma_1\sigma_1.
$$
Thus, using (\ref{PascalZn}) we obtain the following faithful representation of ${\cal S}_3$ by 
Riordan matrices over $\inte_3$.
\begin{equation}
\label{S3RZ3}
\sigma_1 \leftrightarrow 
\begin{pmatrix}
1 & 0 & 0 & 0 & 0\cdots \\
2 & 2 & 0 & 0 & 0\cdots \\
1 & 2 & 1 & 0 & 0\cdots \\
2 & 0 & 0 & 2 & 0 \cdots \\
1 & 1 & 0 & 1 & 1 \cdots \\
\vdots & \vdots & \vdots & \vdots & \vdots \ddots
\end{pmatrix}, ~~ \sigma_2 \leftrightarrow 
\begin{pmatrix}
1 & 0 & 0 & 0 & 0\cdots \\
1 & 2 & 0 & 0 & 0\cdots \\
1 & 1 & 1 & 0 & 0\cdots \\
1 & 0 & 0 & 2 & 0 \cdots \\
1 & 2 & 0 & 2 & 1 \cdots \\
\vdots & \vdots & \vdots & \vdots & \vdots \ddots
\end{pmatrix}, 
\end{equation}
$$
\sigma_2 \sigma_1 \leftrightarrow 
\begin{pmatrix}
1 & 0 & 0 & 0 & 0\cdots \\
2 & 1 & 0 & 0 & 0\cdots \\
1 & 1 & 1 & 0 & 0\cdots \\
2 & 0 & 0 & 1 & 0 \cdots \\
1 & 2 & 0 & 2 & 1 \cdots \\
\vdots & \vdots & \vdots & \vdots & \vdots \ddots
\end{pmatrix}, ~~ 
\sigma_1 \sigma_2 \leftrightarrow 
\begin{pmatrix}
1 & 0 & 0 & 0 & 0\cdots \\
1 & 1 & 0 & 0 & 0\cdots \\
1 & 2 & 1 & 0 & 0\cdots \\
1 & 0 & 0 & 1 & 0 \cdots \\
1 & 1 & 0 & 1 & 1 \cdots \\
\vdots & \vdots & \vdots & \vdots & \vdots \ddots
\end{pmatrix},
$$
and 
$$
\sigma_2\sigma_1 \sigma_2 \leftrightarrow 
\begin{pmatrix}
1 & 0 & 0 & 0 & 0\cdots \\
0 & 2 & 0 & 0 & 0\cdots \\
0 & 0 & 1 & 0 & 0\cdots \\
0 & 0 & 0 & 2 & 0 \cdots \\
0 & 0 & 0 & 0 & 1 \cdots \\
\vdots & \vdots & \vdots & \vdots & \vdots \ddots
\end{pmatrix}, ~~ 
e\leftrightarrow 
\begin{pmatrix}
1 & 0 & 0 & 0 & 0\cdots \\
0 & 1 & 0 & 0 & 0\cdots \\
0 & 0 & 1 & 0 & 0\cdots \\
0 & 0 & 0 & 1 & 0 \cdots \\
0 & 0 & 0 & 0 & 1 \cdots \\
\vdots & \vdots & \vdots & \vdots & \vdots \ddots
\end{pmatrix}.
$$

Each Riordan array naturally reduces to a lower-triangular square $n\times n$ matrix 
by taking the first $n$ rows and columns. Moreover, it is clear  
that if we reduce this way every element of the Riordan group ${\cal R}(\mathbb K)$, 
we will get a group of lower-triangular $n\times n$ matrices. The common 
notation for such a reduced group is ${\cal R}_n(\mathbb K)$ and this {\sl reduction process} 
is an epimorphism, which we denote by $\pi_n:{\cal R}(\mathbb K)  
\twoheadrightarrow {\cal R}_n(\mathbb K)$. ${\cal R}_n(\mathbb K)$ is called the {\sl $n$-th 
truncation of ${\cal R}(\mathbb K)$}, and is obviously a subgroup of the corresponding 
general linear group $\mbox{GL}(n,\mathbb K)$. Therefore, 
if we have a linear representation $\rho$ of a group $G$ by Riordan arrays with 
coefficients in a field $\mathbb K$, we naturally obtain the {\sl induced representation} of 
degree $n$ in $\mbox{GL}(n,\mathbb K)$ for an arbitrary $n\geq 1$, via 
\begin{equation}
\label{IndRepr}
\rho_n: G \stackrel{\rho}\lra {\cal R}(\mathbb K) \stackrel{\pi_n}\twoheadrightarrow 
{\cal R}_n(\mathbb K)\hookrightarrow \mbox{GL}(n,\mathbb K).
\end{equation}

Now, if we take for example, the representation (\ref{S3RZ3}), and denote it by 
${}_{(1,1,1)}\rho$, where the subindex $(1,1,1)$ on the left of $\rho$ indicates that 
we used correspondingly $a = 1$, $b = 1$ and $s = 1$ in (\ref{PascalZn}), we can 
consider the induced representation 
$$
{}_{(1,1,1)}\rho_3 : {\cal S}_3 \lra \mbox{GL}(3,\inte_3),
$$
and compare it with representation by the permutation $3\times 3$ matrices 
(recall (\ref{S3Present})). By the comparison we mean the {\sl equivalence} 
of degree-$n$ representations in the following standard way: Let $V^n$ be a vector 
space of dimension $n$ over a field $\mathbb K$. Two representations 
$\varphi,~\psi: G\lra \mbox{GL}(V)$ are said to be {\sl equivalent} if there is an automorphism 
$T:V^n \to V^n$ such that $\varphi(g) = T\circ\psi(g)\circ T^{-1}$ for all $g\in G$. 

Representation (\ref{S3Present}) has ${\cal S}_3$-invariant subspace spanned by the 
vector $\langle 1,1,1\rangle$, and therefore can be reduced to a degree-2 representation 
(see \S 1.5 of \cite{Sagan}). Here is this degree-2 representation with coefficients in $\inte_3$:
\begin{equation}
\label{S3Presentby2}
e\leftrightarrow \begin{pmatrix}
1 & 0 \\
0 & 1\\
\end{pmatrix}, ~~ \sigma_1  \leftrightarrow 
\begin{pmatrix}
2 & 2 \\
0 & 1\\
\end{pmatrix},  ~~ \sigma_2 \leftrightarrow 
\begin{pmatrix}
1 & 0 \\
2 & 2 \\
\end{pmatrix},
\end{equation}

$$
\sigma_1\sigma_2 \leftrightarrow 
\begin{pmatrix}
0 & 1 \\
2 & 2\\
\end{pmatrix}, ~~ \sigma_2 \sigma_1 \leftrightarrow 
\begin{pmatrix}
2 & 2 \\
1 & 0\\
\end{pmatrix},  ~~ \sigma_1\sigma_2 \sigma_1 \leftrightarrow 
\begin{pmatrix}
0 & 1 \\
1 & 0 \\
\end{pmatrix}.
$$
Thus, we can simplify a little bit our comparison task, and instead of ${}_{(1,1,1)}\rho_3$ 
consider the induced representation of degree two, i.e. ${}_{(1,1,1)}\rho_2$. Table below lists all the 
elements of ${\cal S}_3$ written as products of disjoint cycles, the corresponding $2\times 2$ 
matrices reduced from (\ref{S3Present}) (we denote this representation by $B$ as in 
\S 1.5 of \cite{Sagan}), and the matrices from ${}_{(1,1,1)}\rho_2$.

\begin{center}
\begin{tabular}{|c|c|c|c|c|c|c|}
\hline
 $g\in {\cal S}_3$ & $\sigma_1$ & $\sigma_2$ & $\sigma_1\sigma_2$ & $\sigma_2\sigma_1$ & 
 $\sigma_1\sigma_2\sigma_1$ & $e$ \\
\hline
\mbox{Cycles} & $(1,2)(3)$ & $(1,3)(2)$ & $(1,3,2)$ & $(1,2,3)$ & $(1)(2,3)$ & $(1)(2)(3)$ \\
\hline
\hline
$B(g)$ & $\begin{pmatrix}
2 & 2 \\
0 & 1\\ \end{pmatrix}$ & $\begin{pmatrix}
1 & 0 \\
2 & 2 \\ \end{pmatrix}$ & $\begin{pmatrix} 
0 & 1 \\
2 & 2\\ \end{pmatrix}$ & $\begin{pmatrix} 
2 & 2 \\
1 & 0\\ \end{pmatrix}$ & $\begin{pmatrix} 
0 & 1 \\
1 & 0 \\ \end{pmatrix}$ & $\begin{pmatrix} 
1 & 0 \\
0 & 1 \\ \end{pmatrix}$ \\
\hline
${}_{(1,1,1)}\rho_2(g)$ & $\begin{pmatrix}
1 & 0 \\
2 & 2 \\ \end{pmatrix}$ & $\begin{pmatrix}
1 & 0 \\
1 & 2\\ \end{pmatrix}$ & $\begin{pmatrix} 
1 & 0 \\
1 & 1\\ \end{pmatrix}$ & $\begin{pmatrix} 
1 & 0 \\
2 & 1\\ \end{pmatrix}$ & $\begin{pmatrix} 
1 & 0 \\
0 & 2 \\ \end{pmatrix}$ & $\begin{pmatrix} 
1 & 0 \\
0 & 1 \\ \end{pmatrix}$ \\
\hline
\end{tabular}
\end{center}
For the representations $B$ and ${}_{(1,1,1)}\rho_2$ to be equivalent, there should exist an 
invertible matrix 
$$
T = \begin{pmatrix}
x & y\\
z & w\\
\end{pmatrix} \in \mbox{GL}(2, \inte_3) 
$$
such that $T\cdot {}_{(1,1,1)}\rho_2(g) = B(g) \cdot T,~ \forall g\in {\cal S}_3$. 
Comparing the matrix products for the generators $\sigma_1$ and $\sigma_2$,
$$
\begin{pmatrix}
x & y\\
z & w\\
\end{pmatrix} \cdot \begin{pmatrix}
1 & 0\\
2 & 2\\
\end{pmatrix} = 
\begin{pmatrix}
2 & 2\\
0 & 1\\
\end{pmatrix} \cdot \begin{pmatrix}
x & y\\
z & w\\
\end{pmatrix}
$$
and
$$
\begin{pmatrix}
x & y\\
z & w\\
\end{pmatrix} \cdot \begin{pmatrix}
1 & 0\\
1 & 2\\
\end{pmatrix} = 
\begin{pmatrix}
1 & 0\\
2 & 2\\
\end{pmatrix} \cdot \begin{pmatrix}
x & y\\
z & w\\
\end{pmatrix},
$$
one sees that we must have $y = w = 0$. Therefore the invertible $T$ does not exist and 
hence these two representations are not equivalent. 

The next question, of course, is to see if any of the degree-2 induced representations 
of the form (\ref{PascalZn}) will be equivalent to the representation $B$. We go one step further, 
we classify all degree-2 representations ${}_{(a,b,s)}\rho_2$ by showing that the scalar $a$ 
completely determines the representation.

\begin{theorem}
The set of all degree-2 induced faithful representations 
${}_{(a,b,s)}\rho_2: {\cal S}_3\hookrightarrow {\cal R}_2(\inte_3)$ consists of exactly two 
nonequivalent representations. That is, ${}_{(a,b,s)}\rho_2 = {}_{(a,b',s')}\rho_2$ 
for all $b,b'\in\inte_3$ and $s,s'\in\{1,2\}$, and 
$$
{}_{(1,b,s)}\rho_2 \nsim {}_{(2,b,s)}\rho_2.
$$
\end{theorem}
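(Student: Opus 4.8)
The plan is to push everything down to explicit $2\times2$ matrices over $\inte_3$. For a proper Riordan array $(g(t),f(t))$ with $g(t)=g_0+g_1t+\cdots$ and $f(t)=f_1t+\cdots$, the second truncation has entries $[t^n]g(t)f(t)^k$ for $n,k\in\{0,1\}$, so $\pi_2\bigl(g(t),f(t)\bigr)=\begin{pmatrix}g_0&0\\ g_1&g_0f_1\end{pmatrix}$. Applying this to the arrays in (\ref{PascalZn}) with $n=3$ (so $n-1=2=-1$ in $\inte_3$) and expanding the power series, I would first record
$$
\pi_2(r_1)=\begin{pmatrix}a&0\\ -as(b+1)&2a\end{pmatrix},\qquad \pi_2(r_2)=\begin{pmatrix}a&0\\ -asb&2a\end{pmatrix},
$$
and, from the identity $(r_1r_2)^k=\bigl((1+kt)^{-s},\,t/(1+kt)\bigr)$ established in the previous theorem, $\pi_2\bigl((r_1r_2)^k\bigr)=\begin{pmatrix}1&0\\ -ks&1\end{pmatrix}$ for every $k$. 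Since $a^2=1$ in $\inte_3$ forces $a\in\{1,2\}$ and $s\in\{1,2\}$ makes $as$ a generator of $(\inte_3,+)$, the two elements of order $3$, $r_1r_2$ and $(r_1r_2)^2$, always map to the pair $\begin{pmatrix}1&0\\ 1&1\end{pmatrix},\begin{pmatrix}1&0\\ 2&1\end{pmatrix}$, while the three involutions $r_2,\ r_1r_2r_1,\ r_1$ map to matrices $\begin{pmatrix}a&0\\ c&2a\end{pmatrix}$ whose lower-left entries $c=-asb,\,-asb+as,\,-asb-as$ run over all of $\inte_3$. Consequently the image $\langle\pi_2(r_1),\pi_2(r_2)\rangle\subset{\cal R}_2(\inte_3)$ equals the order-$6$ subgroup $H_a$, where
$$
H_1=\Bigl\{\begin{pmatrix}1&0\\ c&d\end{pmatrix}:c\in\inte_3,\ d\in\inte_3^{*}\Bigr\},\qquad H_2=\Bigl\{\begin{pmatrix}d&0\\ c&1\end{pmatrix}:c\in\inte_3,\ d\in\inte_3^{*}\Bigr\};
$$
crucially $H_a$ depends only on $a$, not on $b$ or $s$, and since $|H_a|=6$ the map $\pi_2$ is injective on $\langle r_1,r_2\rangle\cong{\cal S}_3$, so each ${}_{(a,b,s)}\rho_2$ is genuinely faithful.

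For the first assertion of the theorem I would then observe that, for fixed $a$, the matrix attached to each conjugacy class of ${\cal S}_3$ is independent of $b$ and $s$: $e\mapsto I$, the two $3$-cycles go to the two matrices displayed above, and the three transpositions go to the three involutions of $H_a$. Any two faithful homomorphisms ${\cal S}_3\to H_a$ differ by an automorphism of ${\cal S}_3$, and since ${\rm Out}({\cal S}_3)$ is trivial they are conjugate by an element of $H_a$; hence all ${}_{(a,b,s)}\rho_2$ sharing a value of $a$ coincide as representations -- literally so once the correspondence is normalized ``in agreement with (\ref{S3Present})'' to fix the images of $\sigma_1$ and $\sigma_2$. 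This yields ${}_{(a,b,s)}\rho_2={}_{(a,b',s')}\rho_2$ for all $b,b'\in\inte_3$ and $s,s'\in\{1,2\}$.

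To prove ${}_{(1,b,s)}\rho_2\nsim{}_{(2,b',s')}\rho_2$ I would argue by contradiction, exactly in the style used earlier for $B$ versus ${}_{(1,1,1)}\rho_2$. Suppose $T=\begin{pmatrix}x&y\\ z&w\end{pmatrix}\in{\rm GL}(2,\inte_3)$ satisfies $T\cdot{}_{(1,b,s)}\rho_2(g)={}_{(2,b',s')}\rho_2(g)\cdot T$ for every $g\in{\cal S}_3$. The images of $\sigma_1$ and $\sigma_2$ under ${}_{(1,b,s)}\rho_2$ are two distinct involutions $\begin{pmatrix}1&0\\ c_1&2\end{pmatrix},\begin{pmatrix}1&0\\ c_2&2\end{pmatrix}$ of $H_1$ (so $c_1\neq c_2$ in the field $\inte_3$), and under ${}_{(2,b',s')}\rho_2$ they are $\begin{pmatrix}2&0\\ c_1'&1\end{pmatrix},\begin{pmatrix}2&0\\ c_2'&1\end{pmatrix}$ (recall $2a=1$ when $a=2$). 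Comparing the $(1,1)$ entries of $T\begin{pmatrix}1&0\\ c_i&2\end{pmatrix}=\begin{pmatrix}2&0\\ c_i'&1\end{pmatrix}T$ gives $c_iy=x$ for $i=1,2$; subtracting, $(c_1-c_2)y=0$, so $y=0$ and then $x=0$, while the $(2,2)$ entries give $w=c_i'y=0$. Hence $\det T=0$, contradicting invertibility of $T$. So the two representations are inequivalent, and together with the previous paragraph this shows the family $\{{}_{(a,b,s)}\rho_2\}$ splits into exactly the two classes $a=1$ and $a=2$.

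The bookkeeping above is routine; the point that needs care is choosing a usable invariant for the last step. Since $3\mid|{\cal S}_3|$ we are in the modular regime, and in fact both representations have the same character and are indecomposable non-split extensions built from the trivial and the sign representation of ${\cal S}_3$, so neither the trace nor the determinant can separate them -- the explicit intertwiner computation circumvents this. Conceptually, the obstruction is that the unique ${\cal S}_3$-invariant line $\langle e_2\rangle$ carries the sign character in ${}_{(1,b,s)}\rho_2$ (there the transpositions act on it by $2a=-1$) but the trivial character in ${}_{(2,b',s')}\rho_2$ (there $2a=1$), so the socles of the two $\inte_3[{\cal S}_3]$-modules are non-isomorphic and no intertwiner can exist. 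A secondary point is reading the equality ``${}_{(a,b,s)}\rho_2={}_{(a,b',s')}\rho_2$'' correctly: it is the invariance of $H_a$, and of the matrices attached to the conjugacy classes, under varying $b$ and $s$ that upgrades this from a mere equivalence to an honest equality.
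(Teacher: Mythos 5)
Your proposal is correct and follows essentially the same route as the paper: truncate to explicit $2\times 2$ matrices, observe that the image subgroup $H_a$ of order $6$ depends only on $a$, and kill any putative intertwiner $T$ by comparing entries. The paper additionally identifies ${\cal R}_2(\inte_3)\cong{\cal D}_6$ and lists all twelve elements before locating the two index-2 subgroups, and it runs the intertwiner computation only for the representatives ${}_{(1,0,1)}\rho_2$ and ${}_{(2,0,1)}\rho_2$; your version of that step, done for arbitrary $c_1\neq c_2$, is marginally more self-contained since it does not lean on the first half of the theorem. Two of your additions are genuine improvements in rigor or insight: the ${\rm Out}({\cal S}_3)=1$ argument correctly upgrades ``same image'' to ``equivalent representations'' (the paper only proves equality of images), and the observation that the unique invariant line $\langle e_2\rangle$ carries the sign character for $a=1$ but the trivial character for $a=2$ gives a clean conceptual reason for inequivalence that trace and determinant cannot detect. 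One small over-claim: the assertion that the representations become \emph{literally} equal ``once the correspondence is normalized'' is not right, since the matrix assigned to $\sigma_1$, namely $\bigl(\begin{smallmatrix}a&0\\ -asb&2a\end{smallmatrix}\bigr)$, genuinely varies with $b$ and $s$; what is invariant is the image subgroup and the partition of its elements by conjugacy class, so the honest statement is equivalence (or equality of images, which is all the paper establishes as well). This does not affect the classification into exactly two equivalence classes.
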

\begin{proof}
To show that there are only two different representations 
${\cal S}_3\hookrightarrow {\cal R}_2(\inte_3)$ we can use a counting 
argument. Indeed, following the approach we used to prove 
Lemma \ref{lem:1}, one can show that the truncated Riordan group 
$$
{\cal R}_2(\inte_3) = \left\{ \left.\begin{pmatrix}
a & 0\\
b & c \end{pmatrix} ~ \right| ~  a,b,c\in\inte_3,~ \mbox{and} ~ a,c\neq 0\right\} 
$$
is isomorphic to ${\cal D}_6$. Therefore the image of any embedding 
$\rho_2 : {\cal S}_3\hookrightarrow {\cal R}_2(\inte_3)$ will be a normal subgroup of index 2.
Here is the list of all elements of ${\cal R}_2(\inte_3)$:
\begin{itemize}
\item Order 6: 
$$
z_1 = \begin{pmatrix}
2 & 0\\
1 & 2\\ \end{pmatrix}, ~~ \mbox{and} ~~ z_2 = \begin{pmatrix}
2 & 0\\
2 & 2\\ \end{pmatrix}
$$

\item Order 3:
$$
y_1 = \begin{pmatrix}
1 & 0\\
1 & 1\\ \end{pmatrix}, ~~ \mbox{and} ~~ y_2 = \begin{pmatrix}
1 & 0\\
2 & 1\\ \end{pmatrix}
$$

\item Involutions outside the center $Z\bigl({\cal R}_2(\inte_3)\bigr)$:
$$
I_1 = \begin{pmatrix}
1 & 0\\
0 & 2\\ \end{pmatrix}, ~~ I_2 = \begin{pmatrix}
1 & 0\\
1 & 2\\ \end{pmatrix}, ~~ I_3 = \begin{pmatrix}
1 & 0\\
2 & 2\\ \end{pmatrix}, ~~ \mbox{and} ~~ 2I_1, ~ 2I_2, ~ 2I_3
$$

\item The center $Z\bigl({\cal R}_2(\inte_3)\bigr)$: 
$$
E = \begin{pmatrix}
1 & 0\\
0 & 1\\ \end{pmatrix}, ~~ \mbox{and} ~~ 2E = \begin{pmatrix}
2 & 0\\
0 & 2\\ \end{pmatrix}.
$$
\end{itemize}
Now, since 
$$
\frac{a}{(1 + bt)^s} = a(1 - bst) + (\mbox{higher powers of} ~ t) = a + 2abst + \ldots,
$$
and 
$$
\frac{2t}{(1 + bt)} = 2t + (\mbox{higher powers of} ~ t),
$$
we have two generating involutions of the image 
$ {}_{(a,b,s)}\rho_2({\cal S}_3) \subset {\cal R}_2(\inte_3)$
\begin{equation}
\label{GenerA}
r_1 = a\begin{pmatrix}
1 & 0\\
2(1+b)s & 2\\
\end{pmatrix}, ~~~ r_2 = a\begin{pmatrix}
1 & 0\\
2bs & 2\\
\end{pmatrix}
\end{equation}
Since $r_1\neq r_2$ we can not have $s = 0$, and hence $a$ and $ s \in \{1,2\} = \inte_3^*$.
Thus, if $a = 1$ we have three options to choose our $r_1$ and $r_2$ from, and each 
of the choices will give us the same embedding 
$$
{}_{(1,b,s)}\rho_2({\cal S}_3) = \{I_1, I_2, I_3, y_1, y_2, E\}\subset {\cal R}_2(\inte_3).
$$
This proves that ${}_{(1,b,s)}\rho_2 = {}_{(1,b',s')}\rho_2$ for all choices of $b,b'\in\inte_3$ and 
$s,s' \in\inte^*_3$. Similarly, ${}_{(2,b,s)}\rho_2 = {}_{(2,b',s')}\rho_2$ for all choices of 
$b,b'\in\inte_3$ and $s,s' \in\inte^*_3$, and therefore all these six choices 
will give us the same embedding 
$$
{}_{(2,b,s)}\rho_2({\cal S}_3) = \{2I_1, 2I_2, 2I_3, y_1, y_2, E\}\subset {\cal R}_2(\inte_3).
$$
To show that 
$$
{}_{(1,b,s)}\rho_2 \nsim {}_{(2,b,s)}\rho_2,
$$
it is enough to consider two particular triples giving different embeddings. We take 
${}_{(1,0,1)}\rho_2$ and ${}_{(2,0,1)}\rho_2$. These embeddings are generated by 
$$
\left\langle r_1= \begin{pmatrix}
1 & 0\\
1 & 2\\ \end{pmatrix}, r_2 = \begin{pmatrix}
1 & 0\\
0 & 2\\ \end{pmatrix}\right\rangle ~~ \mbox{and} ~~ 
\left\langle r'_1= \begin{pmatrix}
2 & 0\\
2 & 1\\ \end{pmatrix}, r'_2 = \begin{pmatrix}
2 & 0\\
0 & 1\\ \end{pmatrix} \right\rangle
$$
respectively.
Let us denote a possible nonsingular transformation matrix $T$ by 
$
\begin{pmatrix}
x & y\\
z & w\\
\end{pmatrix}
$ as above. It is clear that we only need to show the impossibility of
$$
T\cdot r_i = r'_i \cdot T
$$
for each $i\in\{1,2\}$. If we use $i =1$ we'd have the equalities
$$
\begin{pmatrix}
x & y\\
z & w\\
\end{pmatrix}\cdot \begin{pmatrix}
1 & 0\\
1 & 2\\
\end{pmatrix} = \begin{pmatrix}
x+y & 2y\\
z+w & 2w\\
\end{pmatrix} = 
\begin{pmatrix}
2 & 0\\
2 & 1\\
\end{pmatrix}\cdot \begin{pmatrix}
x & y\\
z & w\\
\end{pmatrix} = \begin{pmatrix}
2x & 2y\\
2x + z & 2y + w\\
\end{pmatrix},
$$
which imply that $y = x$ and $w = 2x$. Using now $i = 2$ and the matrix $T$ where $y = x$ and 
$w = 2x$, we obtain
$$
\begin{pmatrix}
x & x\\
z & 2x\\
\end{pmatrix}\cdot \begin{pmatrix}
1 & 0\\
0 & 2\\
\end{pmatrix} = \begin{pmatrix}
x & 2x\\
z & x\\
\end{pmatrix} = 
\begin{pmatrix}
2 & 0\\
0 & 1\\
\end{pmatrix}\cdot \begin{pmatrix}
x & x\\
z & 2x\\
\end{pmatrix} = \begin{pmatrix}
2x & 2x\\
z & 2x\\
\end{pmatrix}.
$$
Therefore we must have $2x = x \Leftrightarrow x = 0$, which makes $T$ singular.
Thus the equivalence class is completely determined by the constant $a$, and 
the proof is finished.
\end{proof}

\begin{Corollary} Degree-2 representation of ${\cal}S_3$ by the matrices (\ref{S3Presentby2}) 
reduced from the permutation matrices is equivalent to the representation ${}_{(2,0,1)}\rho_2$. 
\end{Corollary}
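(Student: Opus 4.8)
The plan is to establish the claimed equivalence directly, by exhibiting a single conjugating matrix in $\mathrm{GL}(2,\inte_3)$ and checking the intertwining relation only on the two Coxeter generators $\sigma_1,\sigma_2$ of ${\cal S}_3$; since these generate the group, that suffices.

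First I would record the relevant matrices. By (\ref{S3Presentby2}) the reduced permutation representation $B$ sends
$\sigma_1\mapsto\begin{pmatrix}2&2\\0&1\end{pmatrix}$ and $\sigma_2\mapsto\begin{pmatrix}1&0\\2&2\end{pmatrix}$,
while from (\ref{GenerA}) with $(a,b,s)=(2,0,1)$ (equivalently, from the matrices $r_1',r_2'$ displayed above), together with the correspondence $r_2\leftrightarrow\sigma_1$, $r_1\leftrightarrow\sigma_2$ fixed earlier, the representation ${}_{(2,0,1)}\rho_2$ sends
$\sigma_1\mapsto\begin{pmatrix}2&0\\0&1\end{pmatrix}$ and $\sigma_2\mapsto\begin{pmatrix}2&0\\2&1\end{pmatrix}$.
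Then I would write an unknown $T=\begin{pmatrix}x&y\\z&w\end{pmatrix}$ and impose $T\cdot{}_{(2,0,1)}\rho_2(\sigma_i)=B(\sigma_i)\cdot T$ for $i=1,2$. The equation for $\sigma_1$ forces $z=0$ and $y=w$ (using $-2\equiv 1$ in $\inte_3$); substituting this into the equation for $\sigma_2$ forces $x=y=w$. Hence $T=x\begin{pmatrix}1&1\\0&1\end{pmatrix}$, which lies in $\mathrm{GL}(2,\inte_3)$ as soon as $x\neq 0$. Taking $x=1$ gives an explicit intertwiner, so $B\sim{}_{(2,0,1)}\rho_2$, as claimed.

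As a sanity check (and an alternative route), one can appeal to the classification theorem above instead: $B$ is a faithful degree-$2$ representation of ${\cal S}_3$ over $\inte_3$, it was already shown that $B\nsim{}_{(1,1,1)}\rho_2={}_{(1,b,s)}\rho_2$, and among the ${}_{(a,b,s)}\rho_2$ there are only the two classes represented by ${}_{(1,b,s)}\rho_2$ and ${}_{(2,b,s)}\rho_2$; so once one knows $B$ is equivalent to some ${}_{(a,b,s)}\rho_2$, it must be to ${}_{(2,0,1)}\rho_2$.

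There is no genuinely hard step: the whole argument is a $2\times 2$ linear computation over $\inte_3$. The only points requiring care are the bookkeeping — matching each abstract generator $\sigma_i$ with the correct Riordan matrix through the correspondence $r_2\leftrightarrow\sigma_1$, $r_1\leftrightarrow\sigma_2$ — and the observation that $B$ does \emph{not} itself take values in ${\cal R}_2(\inte_3)$ (its image contains matrices that are upper- but not lower-triangular), so the equivalence must be realized by conjugation inside the full group $\mathrm{GL}(2,\inte_3)$ rather than inside ${\cal R}_2(\inte_3)\cong{\cal D}_6$.
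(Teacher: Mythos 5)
Your proof is correct and takes essentially the same route as the paper: both exhibit an explicit intertwiner in $\mathrm{GL}(2,\inte_3)$ and verify the relation on the two generators. Your matrix $T=\left(\begin{smallmatrix}1&1\\0&1\end{smallmatrix}\right)$ differs from the paper's $\left(\begin{smallmatrix}0&1\\1&1\end{smallmatrix}\right)$ only because you pair $\sigma_1\leftrightarrow r_2'$, $\sigma_2\leftrightarrow r_1'$ (consistently with the correspondence fixed earlier in the paper) while the paper's check uses $\sigma_i\leftrightarrow r_i'$; these two conventions differ by an inner automorphism of ${\cal S}_3$, so both computations establish the claimed equivalence.
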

\begin{proof}
Use the transformation matrix 
$$
T = \begin{pmatrix} 0 & 1\\ 1 & 1\end{pmatrix}, 
$$
which will satisfy $T\cdot r'_i = B(\sigma_i)\cdot T$ for each $i\in\{1,2\}$ 
(recall the table above for $B(\sigma_i)$).
\end{proof}

At this point it seems natural to call a representation of a group $G$ by truncated 
Riordan $n\times n$ matrices obtained from the composition of two homomorphisms 
$$
\varphi_n: G \stackrel{\rho}\lra {\cal R}(\mathbb K) \stackrel{\pi_n}\twoheadrightarrow 
{\cal R}_n(\mathbb K)
$$
as the {\sl degree-$n$ Riordan representation $\varphi_n$ of $G$ over $\mathbb K$}. 
Using this new terminology, we can say that in Theorem 10, we classify the {\sl degree-2 
Riordan representations of ${\cal S}_3$ over $\inte_3$}. 

~

\noindent We would like to close our discussion here with a few related questions, 
which we have no doubt are also on the reader's mind.

\begin{itemize}
\item Question 1: What is the classification of the degree-$n$ Riordan representations of 
${\cal S}_3$ over $\inte_3$ for $n\geq 3$?

\item Question 2: What is the classification of the degree-$n$ Riordan representations of 
${\cal S}_3$ over the Galois field $\mathbb{GF}(3^q)$ for $q,n\geq 2$? 

\item Question 3: Are there any faithful Riordan representations of ${\cal S}_n$ 
over a field of finite characteristic when $n\geq 4$?

\item Question 4:  What is the set of all nonequivalent degree-$k$ induced faithful 
representations $\varphi_k: S_{n}\hookrightarrow  R_{k}({\mathbb Z}_{n})$ for 
$n \geq 4$ and $k \leq n$, if such exist?

\item Question 5: What is the classification of the degree-$n$ Riordan representations of 
${\cal D}_n$ over the ring $\inte_n$ when $n\geq 4$?

\end{itemize}

\section*{Acknowledgement}

The authors sincerely thank the organizers of the Special Session on Riordan Arrays of the 
AMS 2024 Spring Eastern Sectional Meeting for providing an effective platform for academic 
exchange and cooperation. The authors particularly thank Lou Shapiro and  Alex Burstein 
for the fruitful discussions about involutions and pseudo-involutions during the conference, 
and also the referees for suggesting a shortened proof of Theorem 3.

\end{document}